\begin{document}
\title{Flow Matching on Lie Groups}
%
%
% Anonymised
\author{
Finn M. Sherry\inst{1}
\and
Bart M.N. Smets\inst{1}
}
\authorrunning{F.M. Sherry \& B.M.N. Smets}
% First names are abbreviated in the running head.
% If there are more than two authors, 'et al.' is used.
%
\institute{
CASA \& EAISI, Dept. of Mathematics \& Computer Science, Eindhoven University of Technology, the Netherlands\\
\email{\{f.m.sherry,b.m.n.smets\}@tue.nl}
}
\maketitle

\begin{abstract}
Flow Matching (FM) is a recent generative modelling technique: we aim to learn how to sample from distribution $\dist{X}_1$ by flowing samples from some distribution $\dist{X}_0$ that is easy to sample from. 
The key trick is that this flow field can be trained while conditioning on the end point in $\dist{X}_1$: given an end point, simply move along a straight line segment to the end point \cite{Lipman2023FlowModeling}. 
However, straight line segments are only well-defined on Euclidean space. 
Consequently, 
\cite{Chen2024FlowGeometries} generalised the method to FM on Riemannian manifolds, replacing line segments with geodesics or their spectral approximations.
We take an alternative point of view: we generalise to FM on Lie groups with surjective exponential maps by instead substituting exponential curves for line segments. This leads to a simple, intrinsic, and fast implementation for many matrix Lie groups, since the required Lie group operations (products, inverses, exponentials, logarithms) are simply given by the corresponding matrix operations. 
FM on Lie groups could then be used for generative modelling with data consisting of sets of features (in $\R^n$) and poses (in some Lie group), e.g. the latent codes of Equivariant Neural Fields \cite{Wessels2025GroundingFields}.

\keywords{Flow Matching \and Lie Groups \and Exponential Curves \and Generative Modelling}
\end{abstract}

\section{Introduction}\label{sec:introduction}
The aim of generative modelling is to learn how to sample from distribution $\dist{X}$, given a large data set of samples. Chen et al. \cite{Chen2018NeuralEquations} proposed learning a flow from some distribution $\dist{X}_0$ that is easy to sample from, e.g. white noise, to the target distribution $\dist{X} \eqqcolon \dist{X}_1$.
Concretely, we look for a smooth flow
$\psi: [0, 1] \to \Diff(\R^d)$
such that $\dist{X}_1 = (\psi_1)_\# \dist{X}_0$, with $\#$ the measure push-forward. We can then define intermediate distributions $\dist{X}_t \coloneqq (\psi_t)_\# \dist{X}_0$.
Such a flow is induced by a time dependent smooth vector field
$u: [0, 1] \to \sections(T\R^d)$,
% \footnote{$\sections$ gives the smooth sections of a fibre bundle; $\sections(T\R^d)$ are the smooth vector fields on $\R^d$.} 
satisfying $\partial_t \psi_t(\vec{x}) = u_t(\psi_t(\vec{x}))$ for all $\vec{x} \in \R^d$.
Hence, if we have the vector field $u$, we can determine the flow $\psi$ by integrating. We therefore now proceed by looking for such a vector field instead of a flow.

Chen et al. \cite{Chen2018NeuralEquations} suggest approximating such a vector field by training a neural network $u^\theta$. Typically, however, we will not have access to a vector field $u$ inducing the desired flow during training: we only have samples from the distributions $\dist{X}_0$ and $\dist{X}_1$. Consequently, the naive flow matching loss
\begin{equation}\label{eq:euclidean_fm_loss}
\mathcal{L}_\mathrm{FM}(\theta) \coloneqq \expectation\left[\norm{u_\rv{T}^\theta (\vecrv{X}_\rv{T}) - u_\rv{T} (\vecrv{X}_\rv{T})}^2\right],
\end{equation}
with $\rv{T} \sim \Uniform[0, 1]$, $\vecrv{X}_0 \sim \dist{X}_0$, and $\vecrv{X}_t \coloneqq \psi_t(\vecrv{X}_0)$, cannot be computed.
Instead, they define a loss on the flow $\psi_1$, which requires simulating the flow during training, making optimisation more complicated and expensive.

\subsubsection{Euclidean Flow Matching.}
To solve this problem, Lipman et al. \cite{Lipman2023FlowModeling,Lipman2024FlowCode} developed Flow Matching (FM). They proposed to condition the vector field on the end point, simply choosing this conditional vector field to be of the form
\begin{equation}\label{eq:euclidean_flow_field}
u_t (\vec{x} \mid \vec{x}_1) \coloneqq \frac{\vec{x}_1 - \vec{x}}{1 - t};
\end{equation}
integrating this vector field will indeed map any starting point $\vec{x}_0$ to the end point $\vec{x}_1$ along the line segment $\vec{x}_t = (1 - t) \vec{x}_0 + t \vec{x}_1$. Then, we can define the following loss function:
\begin{equation}\label{eq:euclidean_cfm_loss}
\mathcal{L}_\mathrm{CFM}(\theta) \coloneqq \expectation[\norm{u_\rv{T}^\theta (\vecrv{X}_\rv{T}) - u_\rv{T} (\vecrv{X}_\rv{T} \mid \vecrv{X}_1)}^2],
\end{equation}
with $\rv{T} \sim \Uniform[0, 1]$, $\vecrv{X}_0 \sim \dist{X}_0$, $\vecrv{X}_1 \sim \dist{X}_1$, and $\vecrv{X}_t \coloneqq (1 - t) \vecrv{X}_0 + t \vecrv{X}_1$ for $t \in [0, 1]$. Note that we \emph{can} compute \eqref{eq:euclidean_cfm_loss}, since we can sample from $\Uniform[0, 1]$, $\dist{X}_0$, and $\dist{X}_1$. It turns out that the gradients (with respect to network parameters $\theta$) of $\mathcal{L}_\mathrm{FM}(\theta)$ and $\mathcal{L}_\mathrm{CFM}(\theta)$ coincide \cite[Thm.~4]{Lipman2024FlowCode}. We can therefore train our network using (stochastic estimates of) the gradient of $\mathcal{L}_\mathrm{CFM}(\theta)$.

\subsubsection{Riemannian Flow Matching.}
However, straight line segments are not well-defined on general Riemannian manifolds.
Consequently, Chen et al. \cite{Chen2024FlowGeometries} generalised this method to FM on Riemannian manifolds. The core principles remain the same, but we now need another way of defining a conditional vector field. The authors found that this can be done by differentiating a premetric. One could use the geodesic distance, yielding geodesics as the integral curves of the conditional vector field. Geodesics are, however, only easy to compute on simple manifolds such as spheres. On other manifolds one must therefore design a tractable premetric, e.g. using spectral distances, and the conditional vector field typically still must be simulated.

% Fun but not so relevant.
% \subsubsection{Shortcut models.}
% One limitation of all FM models is that one learns the vector field $u$ as opposed to $\psi_1$, the flow at $t = 1$. Consequently, when generating new samples from $\dist{X}_1$, one must integrate $u$, which typically requires a small time step/many integration steps. Since the network must be evaluated at every time step, this can become expensive. \cite{Frans2025OneModels} adapted FM to shortcut models, which learn to integrate the vector field $u$, allowing sample generation with a few -- or even only a single -- network evaluations, with only a slight impact on training cost. We have also implemented shortcut models on Lie groups, see \textbf{anonymous}. However, for ease of exposition and to save space, we here only discuss FM.
\subsubsection{Our Contribution.}
We take an alternative approach: we generalise FM to Lie groups with surjective exponential maps (Thm.~\ref{thm:optimise_on_conditional_loss}), using a conditional flow field whose integral curves are exponential curves (Prop.~\ref{prop:lie_group_flow_field}). This leads to a simple and simulation-free implementation for many Lie groups. On matrix Lie groups the implementation can be particularly straightforward, since the required operations (products, inverses, exponentials, and logarithms) are given by the corresponding matrix operations. Additionally, our method is intrinsic, so all intermediate distributions live on the group by construction.
We show this generalises Euclidean FM \cite{Lipman2023FlowModeling}, by recasting it as FM on the translation group.
As a proof of concept, we performed FM on three Lie groups (Sec.~\ref{sec:experiments}):
\begin{enumerate}
    \item $\SE(2)$: simple group with efficient hand crafted implementation (Fig.~\ref{fig:interpolation_se2}).
    \item $\SO(3)$: matrix group with simple implementation (Fig.~\ref{fig:interpolation_so3}).
    \item $\SE(2) \times \Rtwo$: product group, interesting for generative modelling (Fig.~\ref{fig:interpolation_se2_by_r2}).
\end{enumerate}
FM on Lie groups could be used for more typical generative modelling tasks, e.g. generating images.
Current image generation techniques commonly use variational autoencoders to reduce the ; FM on Lie groups could instead use the more geometrically interpretable latent space afforded by Equivariant Neural Fields \cite{Wessels2025GroundingFields}, consisting of sets of features (in $\R^n$) and poses (in some Lie group).
\section{Lie Group Flow Matching}\label{sec:lie_group_flow_matching}
We first introduce the basic notation for the group operations we use.
\begin{definition}[Lie Group Operations]\label{def:lie_group_operations}
Let $G$ be a Lie group with Lie algebra $\mathfrak{g}$. We denote multiplication of $g_0, g_1 \in G$ by $g_0 g_1$, and the inverse of $g \in G$ by $g^{-1}$. We define the \emph{left action} for any $g \in G$ by $L_g: G \to G; h \mapsto g h$, with push-forward $(L_g)_*$.
We define the \emph{Lie group exponential} by
\begin{equation}\label{eq:exponential_map}
\exp: \mathfrak{g} \to G; A \mapsto \gamma(1), \textrm{ with $\gamma$ the 1-parameter subgroup with } \dot{\gamma}(0) = A.
\end{equation}
If the exponential is surjective,
we can restrict its domain and invert it to find the \emph{Lie group logarithm}:
\begin{equation}\label{eq:logarithm_map}
\log: G \to \mathcal{D}(\exp) \subset \mathfrak{g}; g \mapsto A \textrm{ such that } \exp(A) = g.
\end{equation}
\end{definition}
\begin{remark}
There does not appear to be a complete classification of Lie groups with surjective exponential map. The (complex) general linear group does have a surjective exponential map, but there are subgroups where it fails to be surjective, e.g. the real special linear group \cite{Hall2015LieRepresentations}. However, many Lie groups do have a surjective exponential map, including the special unitary groups $\SU(d)$, the similarity groups $\SIM(d)$, and the Heisenberg groups, in addition to the groups considered in this work.
\end{remark}
Next, we derive FM on Lie groups. We have distributions $\dist{X}_0$ and $\dist{X}_1$ on a Lie group $G$, and look for a flow $\psi: [0, 1] \to \Diff(G)$ such that $(\psi_1)_\# \dist{X}_0 = \dist{X}_1$. This is induced by a time dependent vector field $u: [0, 1] \to \sections(T G)$, satisfying $\partial_t \psi_t(g) = u_t|_{\psi_t(g)}$ for $g \in G$. 
We want to approximate $u$ with a neural network $u^\theta: [0, 1] \to \sections(T G)$, so we should minimise
\begin{equation}\label{eq:lie_group_fm_loss}
\mathcal{L}_\mathrm{FM}^G(\theta) \coloneqq \expectation\left[\norm{u_\rv{T}^\theta (\grouprv{G}_\rv{T}) - u_\rv{T} (\grouprv{G}_\rv{T})}_{\mathcal{G}}^2\right],
\end{equation}
with $\rv{T} \sim \Uniform[0, 1]$, $\grouprv{G}_0 \sim \dist{X}_0$, $\grouprv{G}_t = \psi_t(\grouprv{G}_0)$, and $\mathcal{G}$ some metric tensor field. It is natural to choose $\mathcal{G}$ left-invariant, since then the push-forward of the left action, which can be used to identify tangent spaces with the Lie algebra, is an isometry. It is again impossible to compute the loss in \eqref{eq:lie_group_fm_loss}. We therefore once more introduce a conditional vector field. If the exponential map is surjective, we can always connect $g_0, g_1 \in G$ with an \emph{exponential curve}:
\begin{equation}\label{eq:exponential_curve}
\gamma: [0, 1] \to G; t \mapsto g_0 \exp(t \log(g_0^{-1} g_1)).
\end{equation}
To perform flow matching, we hence choose the conditional vector field such that its integral curves are the exponential curves, in analogy to \eqref{eq:euclidean_flow_field}: 
\begin{proposition}[Lie Group Flow Field]\label{prop:lie_group_flow_field}
The integral curves of the vector field $u_t(\cdot \mid g_1): [0, 1] \to \sections(TG)$, with $g_1 \in G$, given by
\begin{equation}\label{eq:lie_group_flow_field}
u_t(g \mid g_1) = \frac{(L_g)_* \log(g^{-1} g_1)}{1 - t},
\end{equation}
are the exponential curves ending in $g_1$.
\end{proposition}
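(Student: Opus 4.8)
The plan is to verify directly that every exponential curve \eqref{eq:exponential_curve} solves the ODE $\dot\gamma(t) = u_t(\gamma(t) \mid g_1)$, and then appeal to uniqueness of integral curves. Fix $g_0, g_1 \in G$, abbreviate $A \coloneqq \log(g_0^{-1} g_1) \in \mathfrak{g}$, and set $\gamma(t) \coloneqq g_0 \exp(tA)$, so that $\gamma(0) = g_0$ and $\gamma(1) = g_0 \exp(\log(g_0^{-1} g_1)) = g_1$. Since for each fixed $t \in [0, 1)$ the map $g \mapsto u_t(g \mid g_1)$ is smooth (the logarithm is smooth on its domain, and $(L_g)_*$ depends smoothly on $g$), Picard--Lindelöf gives uniqueness of integral curves; hence it suffices to check that $\gamma$ is one.

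For the left-hand side, recall that $t \mapsto \exp(tA)$ is by definition the $1$-parameter subgroup with velocity $A$ at the identity, so by left-invariance of the generated vector field $\frac{d}{dt}\exp(tA) = (L_{\exp(tA)})_* A$. Applying $(L_{g_0})_*$ and using $(L_{g_0})_* \circ (L_{\exp(tA)})_* = (L_{g_0 \exp(tA)})_* = (L_{\gamma(t)})_*$ yields $\dot\gamma(t) = (L_{\gamma(t)})_* A$. For the right-hand side, compute $\gamma(t)^{-1} g_1 = \exp(tA)^{-1} g_0^{-1} g_1 = \exp(-tA)\exp(A) = \exp((1-t)A)$, whence $\log(\gamma(t)^{-1} g_1) = (1-t) A$ and therefore $u_t(\gamma(t) \mid g_1) = \frac{(L_{\gamma(t)})_* (1-t)A}{1-t} = (L_{\gamma(t)})_* A = \dot\gamma(t)$, as desired.

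The step that needs care — and the main obstacle — is the identity $\log(\exp((1-t)A)) = (1-t)A$: the logarithm in \eqref{eq:logarithm_map} is only the inverse of $\exp$ restricted to $\mathcal{D}(\exp)$, so this holds precisely when $(1-t)A \in \mathcal{D}(\exp)$ for every $t \in [0,1]$. I expect to handle this by noting it is automatic whenever $\mathcal{D}(\exp)$ is star-shaped about $0 \in \mathfrak{g}$ (as for the principal matrix logarithm), which covers the groups of interest; alternatively one restricts attention to those $g_0, g_1$ for which the segment $\{(1-t)\log(g_0^{-1}g_1) : t \in [0,1]\}$ lies in $\mathcal{D}(\exp)$. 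Finally, the blow-up of $\tfrac{1}{1-t}$ as $t \to 1$ is harmless: $\gamma$ extends continuously to $t = 1$ with $\gamma(1) = g_1$, exactly as in the Euclidean case \eqref{eq:euclidean_flow_field}. This completes the verification and hence, by uniqueness, identifies the integral curves of $u_t(\cdot \mid g_1)$ with the exponential curves ending in $g_1$.
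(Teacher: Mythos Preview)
Your proof is correct and follows essentially the same route as the paper: verify that the exponential curve satisfies $\dot\gamma(t) = (L_{\gamma(t)})_* A$, compute $\log(\gamma(t)^{-1}g_1) = (1-t)A$, and substitute to match $u_t(\gamma(t)\mid g_1)$. You are more careful than the paper in two places---explicitly invoking Picard--Lindel\"of for uniqueness (the paper just says ``since $g_0$ was arbitrary, we have found all integral curves'') and flagging the requirement that $(1-t)A \in \mathcal{D}(\exp)$, which the paper silently assumes---but the argument is the same.
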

\begin{proof}
Let $\gamma$ be the exponential curve \eqref{eq:exponential_curve} connecting $g_0, g_1 \in G$. Then, $\gamma$ solves
\begin{equation*}
\begin{cases}
\dot{\gamma}(t) = (L_{\gamma(t)})_* \log(g_0^{-1} g_1), \\
\gamma(0) = g_0.
\end{cases}
\end{equation*}
Noting that $\log(\gamma(t)^{-1} g_1) = (1 - t) \log(g_0^{-1} g_1)$, we see 
\begin{equation*}
\dot{\gamma}(t) = \frac{(L_{\gamma(t)})_* \log(\gamma(t)^{-1} g_1)}{1 - t} = u_t(\gamma(t) \mid g_1),
\end{equation*}
from which we conclude that $\gamma$ is an integral curve of $u(\cdot \mid g_1)$. 
Since $g_0 \in G$ was arbitrary, we have found all integral curves of $u(\cdot \mid g_1)$. \qed
\end{proof}
Then we generalise the loss function \eqref{eq:lie_group_cfm_loss}:
\begin{equation}\label{eq:lie_group_cfm_loss}
\mathcal{L}_\mathrm{CFM}^G(\theta) \coloneqq \expectation[\norm{u_\rv{T}^\theta (\grouprv{G}_\rv{T}) - u_\rv{T} (\grouprv{G}_\rv{T} \mid \grouprv{G}_1)}_{\mathcal{G}}^2],
\end{equation}
with $\rv{T} \sim \Uniform[0, 1]$, $\grouprv{G}_0 \sim \dist{X}_0$, $\grouprv{G}_1 \sim \dist{X}_1$, and $\grouprv{G}_t \coloneqq \grouprv{G}_0 \exp(t \log(\grouprv{G}_0^{-1} \grouprv{G}_1))$.
\begin{remark}
Since we need a Riemannian metric for the loss \eqref{eq:lie_group_cfm_loss}, one might expect that our FM on Lie groups is a specific instance of Riemannian Flow Matching \cite{Chen2024FlowGeometries}. However, we were not able to find a premetric inducing conditional vector field \eqref{eq:lie_group_flow_field}. The logarithmic distance, defined as the length of the exponential curve connecting two points, is the most obvious choice of premetric, but it only gives rise to \eqref{eq:lie_group_flow_field} in specific cases, e.g. when $\mathcal{G}$ is bi-invariant, i.e. invariant under both left and right actions of the group, so that geodesics and exponential curves coincide \cite{Alexandrino2015LieActions}.
\end{remark}
\begin{theorem}[Optimise on Conditional Loss]\label{thm:optimise_on_conditional_loss}
The gradients w.r.t. network parameters $\theta$ of $\mathcal{L}_\mathrm{FM}^G$ \eqref{eq:lie_group_fm_loss} and $\mathcal{L}_\mathrm{CFM}^G$ \eqref{eq:lie_group_cfm_loss} coincide.
\end{theorem}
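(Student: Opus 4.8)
The plan is to mirror the structure of the proof of \cite[Thm.~4]{Lipman2024FlowCode}, adapted to the Lie group setting. The key object is the \emph{marginal} vector field obtained by averaging the conditional field \eqref{eq:lie_group_flow_field} over the endpoint distribution, weighted by the conditional probability path. First I would define, for each $t \in [0, 1]$, the conditional density $p_t(g \mid g_1)$ of $\grouprv{G}_t = \grouprv{G}_0 \exp(t \log(\grouprv{G}_0^{-1} \grouprv{G}_1))$ given $\grouprv{G}_1 = g_1$ (with respect to, say, the Haar measure on $G$), the marginal density $p_t(g) = \int_G p_t(g \mid g_1)\, \dist{X}_1(\dd g_1)$, and then argue that the marginal field
\begin{equation*}
u_t(g) = \frac{1}{p_t(g)} \int_G u_t(g \mid g_1)\, p_t(g \mid g_1)\, \dist{X}_1(\dd g_1)
\end{equation*}
generates the marginal path $\dist{X}_t$, i.e. satisfies the continuity equation on $G$ with $\dist{X}_0$ as initial condition and $\dist{X}_1$ as terminal condition. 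This is the step that needs the surjectivity of $\exp$ (so that $p_t(\cdot \mid g_1)$ is supported on all of $G$ for $t < 1$) and justifies identifying the $u$ in \eqref{eq:lie_group_fm_loss} with this marginal field.

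Next I would expand both squared norms using bilinearity of the (left-invariant) metric tensor $\mathcal{G}$: writing $\mathcal{L}_\mathrm{FM}^G$ and $\mathcal{L}_\mathrm{CFM}^G$ each as a sum of three terms — the $\norm{u^\theta}^2$ term, the cross term $\langle u^\theta, u \rangle_{\mathcal{G}}$ (resp. $\langle u^\theta, u(\cdot \mid \grouprv{G}_1) \rangle_{\mathcal{G}}$), and a term independent of $\theta$. The first term is identical in both losses, since in $\mathcal{L}_\mathrm{CFM}^G$ the law of $\grouprv{G}_t$ marginalised over $\grouprv{G}_1$ is exactly $\dist{X}_t$, the law of $\grouprv{G}_t$ in $\mathcal{L}_\mathrm{FM}^G$; the third term does not depend on $\theta$ and so drops out of the gradient. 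The crux is the cross term: I would use Fubini/the tower property to write
\begin{equation*}
\expectation\bigl[\langle u_\rv{T}^\theta(\grouprv{G}_\rv{T}), u_\rv{T}(\grouprv{G}_\rv{T} \mid \grouprv{G}_1) \rangle_{\mathcal{G}}\bigr] = \int_0^1 \!\! \int_G \Bigl\langle u_t^\theta(g), \int_G u_t(g \mid g_1) p_t(g \mid g_1)\, \dist{X}_1(\dd g_1) \Bigr\rangle_{\mathcal{G}} \dd g\, \dd t,
\end{equation*}
and then recognise the inner integral as $p_t(g)\, u_t(g)$ by the definition of the marginal field, which turns the right-hand side into the cross term of $\mathcal{L}_\mathrm{FM}^G$. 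Here the left-invariance of $\mathcal{G}$ matters so that pushing vectors to $\mathfrak{g}$ via $(L_g)_*$ is an isometry and the pointwise inner products behave well under the averaging.

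The main obstacle I anticipate is making the marginal-field / continuity-equation argument rigorous on a general Lie group with surjective (but possibly non-injective, non-diffeomorphic) exponential map: one must check that $p_t(g \mid g_1)$ is well-defined and sufficiently regular despite the cut locus where $\log$ is multivalued or non-smooth, that $p_t(g) > 0$ so the division is legitimate, and that the relevant integrals converge so Fubini applies — essentially the same integrability and boundary-term hypotheses that \cite{Lipman2024FlowCode} and \cite{Chen2024FlowGeometries} impose in the Euclidean and Riemannian cases. I would handle this by stating the analogous regularity assumptions (compact support or suitable decay of $\dist{X}_0, \dist{X}_1$, smoothness away from a measure-zero set) and then noting that, away from that null set and for $t < 1$, \eqref{eq:lie_group_flow_field} is smooth and the computation proceeds exactly as above; the endpoint $t = 1$ is handled by the usual limiting argument since $\mathbf{T}$ is uniform on $[0,1]$ and $\{1\}$ is null. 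Everything else — the expansion into three terms, the cancellation of the $\theta$-free term, and the identity of the $\norm{u^\theta}^2$ terms — is routine once the marginal field is in hand.
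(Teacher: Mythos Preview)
Your argument is correct, but the paper takes a shorter route: it simply observes that the pointwise squared norm $\norm{\cdot}_{\mathcal{G}}^2$ at each $g\in G$ is a Bregman divergence and invokes the general result \cite[Prop.~1]{Lipman2024FlowCode}, which establishes the gradient equivalence of the marginal and conditional losses for \emph{any} Bregman-divergence loss in one stroke. You instead redo the specific squared-norm case by hand --- defining the marginal field, expanding both losses into three terms, and matching the cross terms via Fubini and the marginal-field identity --- which is essentially the content of \cite[Thm.~4]{Lipman2024FlowCode} rather than its Bregman generalisation in Prop.~1. Your version is more self-contained and surfaces the regularity hypotheses explicitly; the paper's deferral is more economical and automatically covers other divergences one might substitute for the squared norm. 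One small remark: left-invariance of $\mathcal{G}$ is not actually needed in your cross-term step --- bilinearity of the metric at each fixed point $g$ already lets you pull the endpoint integral through $\langle u_t^\theta(g), \cdot \rangle_{\mathcal{G}}$ --- so that part of your justification can be dropped.
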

\begin{proof}
This is a specific case of a general result by Lipman et al. \cite[Prop.~1]{Lipman2024FlowCode}, using that the squared norm $\norm{\cdot}_\mathcal{G}^2$ at a given point $g \in G$ is a Bregman divergence.
\end{proof}

\subsubsection{Reconsidering Euclidean Flow Matching.}
We can now recast Euclidean FM in the Lie group FM framework. On $\R^d$, we have group product $\vec{x} \vec{y} \coloneqq \vec{x} + \vec{y}$, with inverse $\vec{x}^{-1} \coloneqq -\vec{x}$ and identity $e \coloneqq \vec{0}$. It is not hard to see that the Lie group exponential and logarithms are given by $\exp(\vec{x}) = \vec{x}$ and $\log(\vec{x}) = \vec{x}$, respectively. 
Finally, the push-forward of left multiplication is given by $(L_{\vec{x}})_* = \id$. Hence, we can fill in \eqref{eq:lie_group_flow_field} to find \eqref{eq:euclidean_flow_field}:
\begin{equation*}
\frac{(L_{\vec{x}})_* \log(\vec{x}^{-1} \vec{x}_1)}{1 - t} = \frac{\log(\vec{x}_1 - \vec{x})}{1 - t} = \frac{\vec{x}_1 - \vec{x}}{1 - t}.
\end{equation*}
Likewise, the exponential curve \eqref{eq:exponential_curve} reduces to a line segment:
\begin{equation*}
\vec{x}_0 \exp(t \log(\vec{x}_0^{-1} \vec{x}_1)) = \vec{x}_0 + t (\vec{x}_1 - \vec{x}_0) = (1 - t) \vec{x}_0 + t \vec{x}_1,
\end{equation*}
so that we see that the loss function \eqref{eq:lie_group_cfm_loss} reduces to \eqref{eq:euclidean_cfm_loss}.

\subsubsection{Flow Matching on \texorpdfstring{$\SE(2)$}{SE(2)}.}
As an example of a non-Euclidean Lie group, we consider $\SE(2)$:
\begin{definition}[Special Euclidean Group]\label{def:SE2}
We define the 2D \emph{special Euclidean group} as the Lie group $\SE(2) \coloneqq \Rtwo \rtimes \SO(2)$ of roto-translations on two dimensional Euclidean space. Since $\SO(2) \cong S^1$, we can uniquely identify any rotation $R \in \SO(2)$ with an angle $\theta \in \R / 2\pi \Z$. We denote the counter-clockwise rotation with angle $\theta$ by $R_\theta$. The group product is then given by
\begin{equation}\label{eq:SE2_group_product}
(\vec{x}, \theta) (\vec{y}, \phi) = (\vec{x} + R_\theta \vec{y}, \theta + \phi).
\end{equation}
\end{definition}
Note then that we have inverse $(\vec{x}, \theta)^{-1} \coloneqq (-R_\theta^{-1} \vec{x}, -\theta)$ and identity $e \coloneqq (\vec{0}, 0)$. The exponential, with basis $A_1 \coloneqq \partial_x|_e$, $A_2 \coloneqq \partial_y|_e$, $A_3 \coloneqq \partial_\theta|_e$, is given by\footnote{Recall that $\sinc(x) \coloneqq \sin(x) / x$.}
\begin{equation}\label{eq:se2_exponential_map}
\exp(c^i A_i) = \begin{pmatrix}
\sinc(c^3/2) (c^1 \cos(c^3/2) - c^2 \sin(c^3/2)) \\
\sinc(c^3/2) (c^1 \sin(c^3/2) + c^2 \cos(c^3/2)) \\
c^3
\end{pmatrix},
\end{equation}
while the logarithm, with $\mathcal{R}(\log) \coloneqq \mathcal{D}(\exp) \coloneqq \Rtwo \times [-\pi, \pi)$, is given by
\begin{equation}\label{eq:se2_logarithm_map}
\log(\vec{x}, \theta) = \begin{pmatrix}
(x \cos(\theta/2) + y \sin(\theta/2)) / \sinc(\theta/2) \\
(-x \sin(\theta/2) + y \cos(\theta/2)) / \sinc(\theta/2) \\
\theta
\end{pmatrix}.
\end{equation}
We now have all the tools to compute the conditional vector field \eqref{eq:lie_group_flow_field} for $\SE(2)$.

\subsubsection{Flow Matching on Matrix Groups.}
On groups with a matrix representation, we can compute products, inverses, exponentials, and logarithms with the corresponding matrix operations, allowing us to piggy-back on existing implementations, at the cost of requiring more memory compared to a hand-crafted implementation working directly with group elements, as suggested for $\SE(2)$.
In Sec.~\ref{sec:experiments}, we perform experiments on the matrix group $\SO(3)$ as an example.
\begin{definition}[Special Orthogonal Group]\label{def:SO3}
We define the 3D \emph{special orthogonal group} as the Lie group $\SO(3)$ of origin-preserving rotations on three dimensional Euclidean space. We can represent $\SO(3)$ with $3 \times 3$ orthogonal matrices with determinant $1$.
\end{definition}
We have implemented flow matching using \texttt{PyTorch} \cite{Ansel2024PyTorch}, which contains methods for matrix multiplication, inverses, and exponentials. It does not contain a matrix logarithm, however; for $\SO(3)$ we can use Rodrigues' formula:
\begin{equation}\label{eq:so3_logarithm_map}
\log(R) \coloneqq \sinc(q) \frac{R - R^T}{2}, \textrm{ with } q \coloneqq \arccos\left(\frac{\tr(R) - 1}{2}\right).
\end{equation}
Since $\SO(3)$ is compact, it can be equipped with a bi-invariant metric. We hence recover Riemannian FM \cite{Chen2024FlowGeometries}, as the geodesics and exponential curves coincide \cite{Hall2015LieRepresentations}.

\subsubsection{Flow Matching on Product Groups.}
If we can perform flow matching on $G$ and $H$, then we can also do so on $(G \times H)^m$ for $m \in \N$: all the relevant operations are inherited from $G$ and $H$. In particular, this means we can perform flow matching on $(\SE(2) \times \R^d)^m$, the space in which e.g. latent codes of Equivariant Neural Fields live \cite{Wessels2025GroundingFields}. We have performed experiments on $\SE(2) \times \Rtwo$, see Sec.~\ref{sec:experiments}.

\subsubsection{Flow Matching on Homogeneous Spaces.}
We call a manifold $\mathcal{M}$ a homogeneous space of Lie group $G$ if $G$ acts transitively on $\mathcal{M}$, which is to say that for any pair of points $p_0, p_1 \in \mathcal{M}$ we can find $g \in G$ such that $g p_0 = p_1$. This allows us to connect $p_0$ and $p_1$ with the curve $\gamma(t) = \exp(t \log(g)) p_0$, which is a projection of an exponential curve in $G$ onto $\mathcal{M}$. Hence, our framework can be generalised to work with homogeneous spaces too.

One difficulty is that there are typically infinitely many $g$ such that $g p_0 = p_1$, and so infinitely many exponential curves. Hence, one must find a way of selecting a single curve. This has been investigated e.g. for the $\SE(3)$ homogeneous space of three dimensional positions and orientations $\R^3 \times S^2$: there is a computationally convenient choice with links to left-invariant distance approximations \cite{Portegies2015NewNeuroimaging}.
\section{Experiments}\label{sec:experiments}
Here we show experiments performed with three groups: $\SE(2)$, $\SO(3)$, and $\SE(2) \times \Rtwo$. The implementations and animations of the flows are available at \url{https://github.com/finnsherry/FlowMatching}.
Recall that we must learn a time dependent vector field $u^\theta: [0, 1] \to \sections(T G)$. In practice, we train a multilayer perceptrons with four hidden layers and a width of 64 neurons to map a group element $g \in G$ and time $t \in [0, 1]$ to the components (in $\R^{\dim G}$) of a vector in the tangent space at $T_g G$ with respect to a fixed left-invariant frame. We can then integrate the learned flow using the Lie group exponential. As a consequence, the flow remains on the group, without needing to impose constraints on the network or project back onto the manifold as in \cite{Chen2024FlowGeometries}.

We can identify $\SE(2) \cong \Rtwo \times S^1$, the space of planar positions and orientations. Similarly, we can identify $\SO(3)$ with the space of spherical positions and orientations, which is a non-trivial fibre bundle over $S^2$ with typical fibre $S^1$ 
(for details on these spaces of positions and orientations, see \cite{Berg2025CrossingImages}).
This means that we can visualise points in $\SE(2)$ and $\SO(3)$ as arrows on the plane and sphere, respectively. For points in $\SE(2) \times \Rtwo$, we can simply separately plot the $\SE(2)$ and $\Rtwo$ components. 

\begin{figure}
\centering
\includegraphics[width=\linewidth, trim={0 0.25cm 0 0.25cm}, clip]{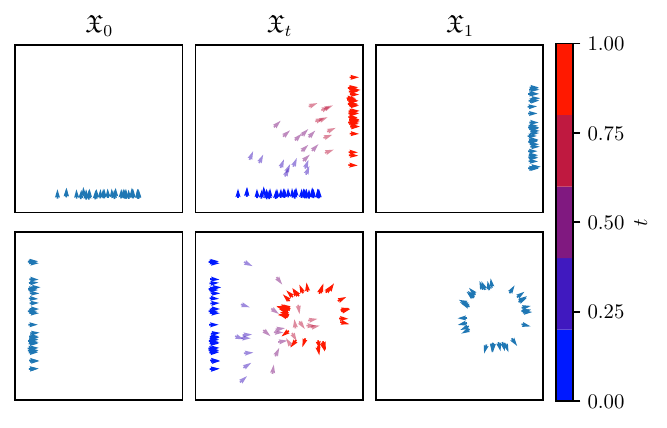}
\caption{FM on $\SE(2)$, interpreted as the space of planar positions and orientations. Top: flowing from horizontal line to vertical line. Bottom: flowing from vertical line to circle.}\label{fig:interpolation_se2}
\end{figure}

\begin{figure}
\centering
\includegraphics[width=\linewidth, trim={0 0.2cm 0 0.2cm}, clip]{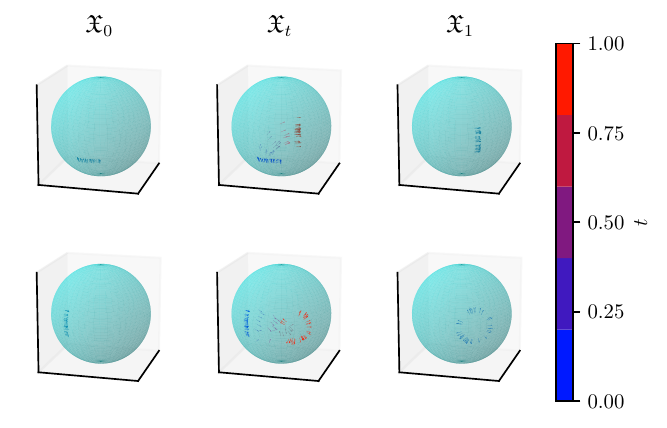}
\caption{FM on $\SO(3)$, interpreted as the space of spherical positions and orientations. Top: flowing from horizontal line to vertical line. Bottom: flowing from vertical line to circle.}\label{fig:interpolation_so3}
\end{figure}

\begin{figure}
\centering
\includegraphics[width=\linewidth, trim={0 0.25cm 0 0.25cm}, clip]{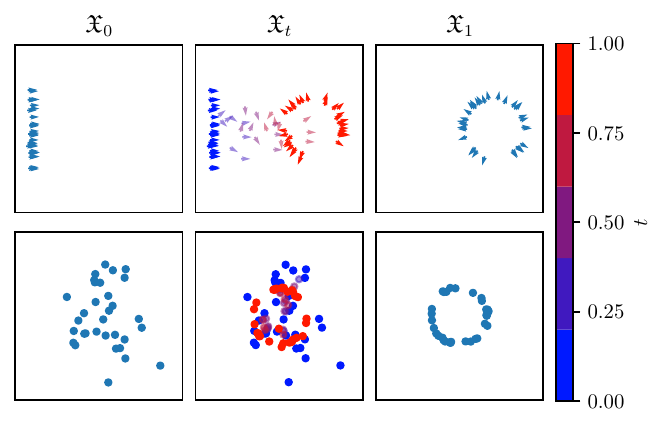}
\caption{FM on $\SE(2) \times \Rtwo$, with $\SE(2)$ interpreted as the space of planar positions and orientations. Note that this shows the flow of a single pair of distributions $\dist{X}_0$ and $\dist{X}_1$: the top row shows the $\SE(2)$ component and bottom row shows the $\Rtwo$ component.}\label{fig:interpolation_se2_by_r2}
\end{figure}

Figs.~\ref{fig:interpolation_se2}, \ref{fig:interpolation_so3}, \ref{fig:interpolation_se2_by_r2} show FM on $\SE(2)$, $\SO(3)$, and $\SE(2) \times \Rtwo$, respectively. In each case, the left column shows samples from the initial distribution $\dist{X}_0$ and the right column shows samples from the target distribution $\dist{X}_1$. In the centre column, we take samples from $\dist{X}_0$ (blue) and flow them forward (transparent); if the network has been trained successfully, the samples at $t = 1$ (red) should appear to be sampled from $\dist{X}_1$. For $\SE(2)$ and $\SO(3)$, the rows show different pairs of distributions $\dist{X}_0$ and $\dist{X}_1$; for $\SE(2) \times \Rtwo$, we have a single pair of distributions $\dist{X}_0$ and $\dist{X}_1$, and the rows show the $\SE(2)$ and $\Rtwo$ components.

In all cases, the samples at $t = 1$ indeed reasonably match the target distribution. In simple cases, where we flow from lines to lines, the interpolants $\dist{X}_t$ also behave nicely. For more complicated cases, where we flow from a line to a circle, the interpolants look messier, which is unsurprising, as the exponential curves connecting samples in $\dist{X}_0$ and $\dist{X}_1$ can be quite intricate.

\subsubsection{Conclusion \& Future Work.}
We generalised FM to Lie groups with surjective exponential maps, using a conditional flow field whose integral curves are exponential curves (Prop.~\ref{prop:lie_group_flow_field}). 
This has an intrinsic, simple, and simulation-free implementation for many Lie groups.  
As a proof of concept, we performed FM on three Lie groups (Sec.~\ref{sec:experiments}).
FM on Lie groups could be used for generative modelling with data consisting of sets of features (in $\R^n$) and poses (in some Lie group), e.g. the latent codes of Equivariant Neural Fields \cite{Wessels2025GroundingFields}.

\subsubsection{Acknowledgements.}
EAISI is gratefully acknowledged for financial support through the EIDMAR programme.
The European Commission is gratefully acknowledged for financial support through HORIZON-MSCA-2020-SE project REMODEL.

\bibliographystyle{splncs04}
\bibliography{references}
\end{document}